\documentclass[11pt,twoside]{amsart}
\usepackage[russian,english]{babel}
\Russian

%

%
\usepackage{amssymb}
\usepackage{latexsym}
\usepackage{calligra}

 \usepackage{mathrsfs}
\hyphenation{classes}
\lefthyphenmin=2  \righthyphenmin=3
\arraycolsep 2pt

\usepackage{color}

\catcode`\@=11
\def\omathop#1#2#3{\let\temp=#1\def\letter{#2}
  \ifcat#3_ \let\next\@@olim\else\let\next\@olim\fi\next#3}
\def\@olim{\letter\text{-}\!\temp}
\def\@@olim_#1{\mathchoice{
   \setbox0=\hbox{$\displaystyle\letter\text{-}\!\temp\!\text{-}\letter$}
   \setbox2=\hbox{$\displaystyle\temp$}
   \setbox4=\hbox{$\scriptstyle#1$}
   \dimen@=\wd4 \advance\dimen@ by -\wd2 \divide\dimen@ by2
   \def\next{\letter\text{-}\!\temp_{\hbox to 0pt{\hss$\scriptstyle#1$\hss}}
     \hskip\dimen@}
   \ifdim\wd2>\wd4 \def\next{\@olim_{#1}}\fi
   \ifdim\wd4>\wd0 \def\next{\mathop{\llap{$\letter$-}\!\temp}\limits_{#1}}\fi
   \next}
   {\@olim_{#1}}{\@olim_{#1}}{\@olim_{#1}}}


\newcommand{\be}{\begin{equation}}
\newcommand{\ee}{\end{equation}}

\def\phi{\varphi}
\newcommand{\bi}{\begin{itemize}}
\newcommand{\ei}{\end{itemize}}
\newcommand{\bn}{\begin{enumerate}}
\newcommand{\en}{\end{enumerate}}
%


\newcommand{\hide}[1]{}          

\theoremstyle{plain}
\newtheorem{thm}{Theorem}[section]

\newtheorem{cor}[thm]{Corollary}
\newtheorem{lemma}[thm]{Lemma}

\newtheorem{example}[thm]{Example}

\theoremstyle{definition}
\newtheorem{definition}[thm]{Definition}

\numberwithin{equation}{section}

\begin{document}

\title{The Dodds-Fremlin type theorem for abstract Uryson operators}

\author{Vladimir~ Orlov, Marat Pliev and Dmitry~ Rode}

\address{Voronezh State University\\
Universitetskaya pl., Voronezh, 394006, Russia}

\address{Southern Mathematical Institute of the Russian Academy of Sciences\\
str. Markusa 22,
Vladikavkaz, 362027 Russia}
%
\address{Voronezh State University\\
Universitetskaya pl., Voronezh, 394006, Russia}

\keywords{Orthogonally additive operators, abstract Uryson operators, $AM$-compact operators, fragments, vector lattices,  domination problem}

\subjclass[2010]{Primary 47H30; Secondary 47H99.}

\begin{abstract}
We continue the investigation of abstract Uryson operators in vector lattices. Using the recently proved  ``Up-and-down'' theorem for order bounded, orthogonally additive operators \cite{Pl-7}, we consider the domination problem for $AM$-compact abstract Uryson operators. We obtain the Dodds-Fremlin type theorem and prove that for an $AM$-compact,  positive abstract Uryson operator $T$ from a Banach lattice $E$ to a order continuous Banach lattice $F$, an every abstract Uryson operator $S:E\to F$, such that   $0\leq S\leq T$ is  also $AM$-compact.
\end{abstract}

\maketitle


\section{Introduction}

Today the theory of linear and orthogonally additive operators in vector lattices and lattice-normed spaces  is an  active  area   of Functional Analysis \cite{Abas, AMP, DP, F, Get,Gum,Pl-3,Pl-4, Pl-5, Pl-6,PP,PP-1,PW, PW-1}.   The aim of this article  is  to continue  this line of  investigations and to prove  the Dodds-Fremlin type theorem for abstract Uryson operators acting between Banach lattices.
\footnote{ The first   author  was supported by  Russian Science Foundation (14-21-00066),
 the Ministry of Education and Science of
Russia in frameworks of state task for higher education organizations in
science for 2014-2016 (Project number 1.1539.2014/K), the second   author  was supported by  Russian Foundation for Basic  Research, the grant number 15-51-53119}

\section{Preliminaries}

The  goal of this section is to introduce some basic definitions and facts. General information on vector lattices and   the reader can find in the books \cite{Al,Ku}.

Let $E$ be a vector lattice. A net $(x_\alpha)_{\alpha \in \Lambda}$ in $E$ \textit{order converges} to an element $x \in E$ (notation $x_\alpha \stackrel{\rm (o)}{\longrightarrow} x$) if there exists a net $(u_\alpha)_{\alpha \in \Lambda}$ in $E_{+}$ such that $u_\alpha \downarrow 0$ and $|x_\beta - x| \leq u_\beta$ for all $\beta\in \Lambda$. The equality $x=\bigsqcup\limits_{i=1}^nx_i$ means that $x=\sum\limits_{i=1}^nx_i$ and $x_i \bot x_j$ if $i\neq j$.
An element $y$ of  $E$ is called a \textit{fragment} (in another terminology, a \textit{component}) of an element $x \in E$, provided $y \bot (x-y)$. The notation $y \sqsubseteq x$ means that $y$ is a fragment of $x$.  If $E$ is a vector lattice and $e \in E$ then by $\mathcal{F}_e$ we denote the set of all fragments of $e$.

An element $e$ of a vector lattice $E$ is called a \textit{projection element} if the band  generated by $e$ is a projection band. A vector lattice $E$ is said to have the \textit{principal projection property} if every element of $E$ is a projection element. For instance, every Dedekind $\sigma$-complete vector lattice has the principal projection property.

\begin{definition} \label{def:ddmjf0}
Let $E$ be a vector lattice, and let $F$ be a real linear space. An operator $T:E\rightarrow F$ is called \textit{orthogonally additive} if $T(x+y)=T(x)+T(y)$ whenever $x,y\in E$ are disjoint.
\end{definition}

It follows from the definition that $T(0)=0$. It is immediate that the set of all orthogonally additive operators is a real vector space with respect to the natural linear operations.

\begin{definition}
Let $E$ and $F$ be vector lattices. An orthogonally additive operator $T:E\rightarrow F$ is called:
\begin{itemize}
  \item \textit{positive} if $Tx \geq 0$ holds in $F$ for all $x \in E$;
  \item \textit{order bounded} if $T$ maps order bounded sets in $E$ to order bounded sets in $F$.
\end{itemize}
An orthogonally additive, order bounded operator $T:E\rightarrow F$ is called an \textit{abstract Uryson} operator.
This class of operators  was introduced and studied in 1990 by Maz\'{o}n and Segura de Le\'{o}n \cite{Maz-1,Maz-2}, and then extended to lattice-normed spaces by Kusraev and the second named author \cite{Ku-1,Ku-2,Pl-3}.
\end{definition}
For example, any linear  operator $T\in L_{+}(E,F)$ defines a positive abstract Uryson operator by $G (f) = T |f|$ for each $f \in E$.
Observe that if $T: E \to F$ is a positive orthogonally additive operator and $x \in E$ is such that $T(x) \neq 0$ then $T(-x) \neq - T(x)$, because otherwise both $T(x) \geq 0$ and $T(-x) \geq 0$ imply $T(x) = 0$. So, the above notion of positivity is far from the usual positivity of a linear operator: the only linear operator which is positive in the above sense is zero. A positive orthogonally additive operator need not be order bounded. Consider, for example, the real function $T: \mathbb R \to \mathbb R$ defined by
$$
T(x)=\begin{cases} \frac{1}{x^{2}}\,\,\,\text{if $x\neq
0$}\\
0\,\,\,\,\,\text{if $x=0$ }.\\
\end{cases}
$$

The set of all abstract Uryson operators from $E$ to $F$ we denote by $\mathcal{U}(E,F)$.
Consider some examples.
The most famous one is the nonlinear integral Uryson operator.

\begin{example}
Let $(A,\Sigma,\mu)$ and $(B,\Xi,\nu)$ be $\sigma$-finite complete measure spaces, and let $(A\times B,\mu\times\nu)$ denote the completion of their product measure space. Let $K:A\times B\times\Bbb{R}\rightarrow\Bbb{R}$ be a function satisfying the following conditions\footnote{$(C_{1})$ and $(C_{2})$ are called the Carath\'{e}odory conditions}:
\begin{enumerate}
  \item[$(C_{0})$] $K(s,t,0)=0$ for $\mu\times\nu$-almost all $(s,t)\in A\times B$;
  \item[$(C_{1})$] $K(\cdot,\cdot,r)$ is $\mu\times\nu$-measurable for all $r\in\Bbb{R}$;
  \item[$(C_{2})$] $K(s,t,\cdot)$ is continuous on $\Bbb{R}$ for $\mu\times\nu$-almost all $(s,t)\in A\times B$.
\end{enumerate}
Given $f\in L_{0}(B,\Xi,\nu)$, the function $|K(s,\cdot,f(\cdot))|$ is $\nu$-measurable  for $\mu$-almost all $s\in A$ and $h_{f}(s):=\int_{B}|K(s,t,f(t))|\,d\nu(t)$ is a well defined and $\mu$-measurable function. Since the function $h_{f}$ can be infinite on a set of positive measure, we define
$$
\text{Dom}_{B}(K):=\{f\in L_{0}(\nu):\,h_{f}\in L_{0}(\mu)\}.
$$
Then we define an operator $T:\text{Dom}_{B}(K)\rightarrow L_{0}(\mu)$ by setting
$$
(Tf)(s):=\int_{B}K(s,t,f(t))\,d\nu(t)\,\,\,\,\mu-\text{a.e.}\,\,\,\,(\star)
$$
Let $E$ and $F$ be order ideals in $L_{0}(\nu)$ and $L_{0}(\mu)$ respectively, $K$ a function satisfying $(C_{0})$-$(C_{2})$. Then $(\star)$ defines an \textit{orthogonally additive
 integral operator} acting from $E$ to $F$ if $E\subseteq \text{Dom}_{B}(K)$ and $T(E)\subseteq F$.
\end{example}

Consider the following order in $\mathcal{U}(E,F):S\leq T$ whenever $T-S$ is a positive operator. Then $\mathcal{U}(E,F)$
becomes an ordered vector space.  If a vector lattice $F$ is Dedekind complete we have the following theorem.
\begin{thm}(\cite{Maz-1},Theorem~3.2)\label{th-1}.
Let $E$ and $F$ be a vector lattices, $F$ Dedekind complete. Then $\mathcal{U}(E,F)$ is a Dedekind complete vector lattice. Moreover for $S,T\in \mathcal{U}(E,F)$ and for $f\in E$ following hold
\begin{enumerate}
\item~$(T\vee S)(f):=\sup\{Tg_{1}+Sg_{2}:\,f=g_{1}\sqcup g_{2}\}$.
\item~$(T\wedge S)(f):=\inf\{Tg_{1}+Sg_{2}:\,f=g_{1}\sqcup g_{2}\}.$
\item~$(T)^{+}(f):=\sup\{Tg:\,g\sqsubseteq f\}$.
\item~$(T)^{-}(f):=-\inf\{Tg:\,g;\,\,g\sqsubseteq f\}$.
\item~$|Tf|\leq|T|(f)$.
\end{enumerate}
\end{thm}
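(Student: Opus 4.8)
The plan is to obtain the lattice operations through an analogue of the Riesz--Kantorovich formulas, replacing the decompositions $x=x_1+x_2$ into positive summands (used for linear operators) by decompositions $f=g_1\sqcup g_2$ into complementary fragments. Fix $S,T\in\mathcal{U}(E,F)$ and set
\[
R(f):=\sup\{Tg_1+Sg_2:\ f=g_1\sqcup g_2\}.
\]
First I would check that this supremum exists in $F$. If $g\sqsubseteq f$ then $g\bot(f-g)$ gives $|g|+|f-g|=|f|$, so $|g|\le|f|$; thus every fragment of $f$ lies in $[-|f|,|f|]$. Since $T$ and $S$ are order bounded and $F$ is Dedekind complete, the set on the right is order bounded above, so its supremum exists. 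The same estimate shows $R$ carries order bounded sets to order bounded sets, so once $R$ is known to be orthogonally additive it is automatically an abstract Uryson operator.

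The heart of the argument, and the step I expect to be the main obstacle, is orthogonal additivity of $R$: $R(f_1+f_2)=R(f_1)+R(f_2)$ whenever $f_1\bot f_2$. This rests on a decomposition lemma for fragments: if $f=f_1\sqcup f_2$ and $g\sqsubseteq f$, then $g=\alpha\sqcup\beta$ with $\alpha\sqsubseteq f_1$ and $\beta\sqsubseteq f_2$, and conversely such $\alpha,\beta$ recombine into a fragment of $f$. I would prove this directly, without appealing to band projections (which need not exist in $E$): from $g\bot(f-g)$ one gets $g^+\sqsubseteq f^+$ and $g^-\sqsubseteq f^-$, and then $g^+\wedge f_i^+$ and $g^-\wedge f_i^-$ are fragments of $f_i^+$ and $f_i^-$ respectively, using that $u\wedge w_1$ is a fragment of $w_1$ whenever $u\sqsubseteq w_1+w_2$ with $w_1\bot w_2\ge 0$. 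Putting $\alpha=g^+\wedge f_1^+-g^-\wedge f_1^-$ and $\beta=g-\alpha$ yields the splitting. Granting the lemma, $R(f_1+f_2)\ge R(f_1)+R(f_2)$ is immediate by pasting fragment decompositions of $f_1$ and $f_2$; for the reverse inequality one takes an arbitrary $f=g_1\sqcup g_2$, splits each $g_i=\alpha_i\sqcup\beta_i$ as above, observes that $f_1=\alpha_1\sqcup\alpha_2$ and $f_2=\beta_1\sqcup\beta_2$, and uses orthogonal additivity of $T,S$ to get $Tg_1+Sg_2=(T\alpha_1+S\alpha_2)+(T\beta_1+S\beta_2)\le R(f_1)+R(f_2)$.

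Next I would verify $R=T\vee S$ in $\mathcal{U}(E,F)$. The trivial decompositions $f=f\sqcup 0$ and $f=0\sqcup f$ show $R\ge T$ and $R\ge S$; if $U\in\mathcal{U}(E,F)$ satisfies $U\ge T$ and $U\ge S$, then for every $f=g_1\sqcup g_2$ we have $Tg_1+Sg_2\le Ug_1+Ug_2=U(g_1+g_2)=Uf$, so $R\le U$. This gives formula (1). Formula (2) follows from $T\wedge S=-\big((-T)\vee(-S)\big)$, and formulas (3), (4) are the special cases $T^+=T\vee 0$ and $T^-=(-T)\vee 0$, where the decomposition $f=g_1\sqcup g_2$ collapses to a single fragment $g\sqsubseteq f$. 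For (5) I would use $|T|=T^++T^-$: since $T^+(f)\ge 0$ and $T^-(f)\ge 0$ (take $g=0$) while $Tf\le T^+(f)$ and $-Tf\le T^-(f)$ (take $g=f$), we obtain $|Tf|\le T^+(f)+T^-(f)=|T|(f)$.

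Finally, for Dedekind completeness, I would take an order bounded subset of $\mathcal{U}(E,F)$ and replace it by the upward directed family of its finite suprema (available once the lattice operations exist). Let $\{T_\gamma\}$ be such a family, bounded above by $U$, and put $R_\infty(f):=\sup_\gamma T_\gamma(f)$, which exists by Dedekind completeness of $F$ and the pointwise bound $T_\gamma(f)\le U(f)$. Directedness is what makes $R_\infty$ orthogonally additive: for $f_1\bot f_2$ one has $\sup_\gamma\big(T_\gamma(f_1)+T_\gamma(f_2)\big)=\sup_\gamma T_\gamma(f_1)+\sup_\gamma T_\gamma(f_2)$. The squeeze between a fixed $T_{\gamma_0}$ and $U$ gives order boundedness, so $R_\infty\in\mathcal{U}(E,F)$ and it is visibly the least upper bound of $\{T_\gamma\}$. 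This yields Dedekind completeness and completes the theorem.
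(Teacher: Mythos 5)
This theorem is stated in the paper without proof: it is quoted from Maz\'{o}n and Segura de Le\'{o}n (\cite{Maz-1}, Theorem~3.2), so there is no internal proof to compare yours against, only the cited source's. Your reconstruction is correct and follows essentially the same route as that source: the Riesz--Kantorovich-type formula over disjoint decompositions $f=g_{1}\sqcup g_{2}$, shown to be orthogonally additive via the lemma that a fragment of $f_{1}+f_{2}$ (with $f_{1}\bot f_{2}$) splits into a fragment of $f_{1}$ plus a fragment of $f_{2}$, with Dedekind completeness obtained from pointwise suprema of upward directed families.
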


Let $E,F$ be vector lattices with $F$ Dedekind complete and $T\in\mathcal{U}_{+}(E,F)$.
By definition
$$
\mathcal{F}_{T}=\{S\in\mathcal{U}_{+}(E,F):\,S\wedge(T-S)=0\}.
$$

For a subset $\mathcal{A}$ of a vector lattice $W$ we employ  the
following notation:
\begin{gather}
\mathcal{A}^{\upharpoonleft}=\{x\in W:\exists\,\,\text{a sequence}\,\,(x_{n})\subset\mathcal{A}\,\,\text{with}\,\,x_{n}\uparrow x\};\notag \\
\mathcal{A}^{\uparrow}=\{x\in W:\exists\,\,\text{a net}\,\,(x_{\alpha})\subset\mathcal{A}\,\,\text{with}\,\,x_{\alpha}\uparrow x\}.\notag
\end{gather}
The meanings of $\mathcal{A}^{\downharpoonleft}$ and $\mathcal{A}^{\downarrow}$ are analogous. As usual, we also write
$$
\mathcal{A}^{\downarrow\uparrow}=(\mathcal{A}^{\downarrow})^{\uparrow};
\mathcal{A}^{\upharpoonleft\downarrow\uparrow}=((\mathcal{A}^{\upharpoonleft})^{\downarrow})^{\uparrow}.
$$
It is clear that $\mathcal{A}^{\downarrow\downarrow}=\mathcal{A}^{\downarrow}$, $\mathcal{A}^{\uparrow\uparrow}=\mathcal{A}^{\uparrow}$.
Consider a positive abstract Uryson operator $T:E\rightarrow F$, where $F$ is Dedekind complete.
Since $\mathcal{F}_{T}$ is a Boolean algebra, it is closed under finite suprema and
infima. In particular, all ``ups and downs'' of $\mathcal{F}_{T}$  are likewise closed under finite suprema and infima, and therefore they
are also directed upward and, respectively, downward.

\begin{definition}\label{def:adm}
A subset $D$ of a vector lattice $E$ is called a \it{lateral ideal} if the following conditions hold
\begin{enumerate}
\item~if $x\in D$ then $y\in D$ for every $y\in\mathcal{F}_{x}$;
\item~if $x,y\in D$, $x\bot y$ then $x+y\in D$.
\end{enumerate}
\end{definition}

Consider some examples.

\begin{example}\label{adm-1}
Let $E$ be a vector lattice. Every order ideal in $E$ is a lateral ideal.
\end{example}

\begin{example}\label{adm-11}
Let $E,F$ be a vector lattices and $T\in\mathcal{U}_{+}(E,F)$. Then $\mathcal{N}_{T}:=\{e\in E:\,T(e)=0\}$  is a lateral ideal.
\end{example}

The following example is important for further considerations.

\begin{lemma}(\cite{Ben}, Lemma~3.5). \label{Ex1}
Let $E$ be a vector lattice and $x\in E$. Then $\mathcal{F}_{x}$ is a lateral ideal.
\end{lemma}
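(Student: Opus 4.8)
The plan is to verify the two defining conditions of a lateral ideal (Definition~\ref{def:adm}) for $D=\mathcal{F}_{x}$, using throughout the single elementary fact that for every $a\in E$ the disjoint complement $\{a\}^{\bot}=\{z\in E:|z|\wedge|a|=0\}$ is an order ideal; in particular it is a linear subspace, and $|b|\leq|a|$ implies $\{a\}^{\bot}\subseteq\{b\}^{\bot}$. I would also record that a fragment $y\sqsubseteq x$ satisfies $|x|=|y|+|x-y|$ (disjoint elements add in modulus), so that $|y|\leq|x|$.

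First I would establish condition~(1), i.e. transitivity of $\sqsubseteq$: if $y\sqsubseteq x$ and $w\sqsubseteq y$ then $w\sqsubseteq x$. From $w\sqsubseteq y$ we have $w\bot(y-w)$ and $|w|\leq|y|$. Decompose $x-w=(x-y)+(y-w)$. The second summand lies in $\{w\}^{\bot}$ by hypothesis. For the first, $y\sqsubseteq x$ gives $x-y\in\{y\}^{\bot}$, and since $|w|\leq|y|$ we have $\{y\}^{\bot}\subseteq\{w\}^{\bot}$, so $x-y\in\{w\}^{\bot}$ as well. As $\{w\}^{\bot}$ is a subspace, $x-w\in\{w\}^{\bot}$, which is exactly $w\bot(x-w)$; hence $w\in\mathcal{F}_{x}$.

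Next I would establish condition~(2): if $u,v\in\mathcal{F}_{x}$ with $u\bot v$, then $u+v\in\mathcal{F}_{x}$. Set $z=x-(u+v)$; it suffices to show $u\bot z$ and $v\bot z$, for then $u,v\in\{z\}^{\bot}$ forces $u+v\in\{z\}^{\bot}$, i.e. $(u+v)\bot z$. Writing $z=(x-u)-v$ and using $u\bot(x-u)$ together with $u\bot v$, the subspace property of $\{u\}^{\bot}$ yields $z\in\{u\}^{\bot}$, that is $u\bot z$; by the symmetric argument with $z=(x-v)-u$ one gets $v\bot z$. Therefore $u+v\sqsubseteq x$.

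I expect no essential obstacle here: the entire argument depends only on the ideal/subspace structure of disjoint complements and is valid in an arbitrary vector lattice, with no recourse to Dedekind completeness or band projections. The only point needing a little attention is the transitivity step, where $w\bot(x-y)$ has to be \emph{derived} from $|w|\leq|y|$ and $y\bot(x-y)$ rather than taken for granted; everything else is a routine bookkeeping of disjointness.
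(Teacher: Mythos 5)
Your proof is correct, but note that the paper contains no proof of this lemma to compare against: it is quoted with a citation to \cite{Ben} (Lemma~3.5), so the statement is imported as a known result rather than argued in the text. Your self-contained argument is sound and stays entirely at the level the statement deserves: both conditions of Definition~\ref{def:adm} are reduced to the facts that $\{a\}^{\bot}$ is an order ideal (hence a subspace, and monotone in the sense that $|b|\leq|a|$ gives $\{a\}^{\bot}\subseteq\{b\}^{\bot}$) and that disjoint elements add in modulus, so $w\sqsubseteq y$ yields $|w|\leq|y|$. The one genuinely nontrivial point --- transitivity of $\sqsubseteq$, where $w\bot(x-y)$ must be derived rather than assumed --- you handle correctly via the inclusion $\{y\}^{\bot}\subseteq\{w\}^{\bot}$ applied to $x-y\in\{y\}^{\bot}$, and the disjoint-sum condition follows cleanly from the subspace property of $\{z\}^{\bot}$ with $z=x-(u+v)$. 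The argument uses no Dedekind completeness, no projection bands, and works for arbitrary (not necessarily positive) $x$, which is exactly the generality the lemma claims; in effect you have supplied the proof that the paper delegates to its reference.
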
\label{le:02}

Let $T\in\mathcal{U}_{+}(E,F)$ and $D\subset E$ be a lateral ideal. Then for every $x\in E$, we  define a map  $\pi^{D}T:E\rightarrow F_{+}$ by the following formula
\begin{gather}
\pi^{D}T(x)=\sup\{Ty:\,y\in\mathcal{F}_{x}\cap D\}.
\end{gather}

\begin{lemma}(\cite{Ben},Lemma~3.6). \label{le:01}
Let $E,F$ be vector lattices  with $F$ Dedekind complete, $\rho\in\mathfrak{B}(F)$, $T\in\mathcal{U}_{+}(E,F)$ and $D$ be a lateral ideal.
Then $\pi^{D}T$ is a positive abstract Uryson operator and $\rho\pi^{D}T\in\mathcal{F}_{T}$.
\end{lemma}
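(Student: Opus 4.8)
The plan is to verify separately that $\pi^{D}T$ is a positive abstract Uryson operator and that $\rho\pi^{D}T$ is a fragment of $T$. First I would record the elementary facts. Since $0\in\mathcal{F}_{x}\cap D$ for every $x$ (the zero element is a fragment of everything and lies in any lateral ideal), the supremum defining $\pi^{D}T(x)$ is over a nonempty set and dominates $T0=0$, so $\pi^{D}T\geq 0$. For the bound $\pi^{D}T\leq T$ I would invoke the formula $T^{+}(f)=\sup\{Tg:g\sqsubseteq f\}$ of Theorem~\ref{th-1}(3): because $T$ is positive and orthogonally additive, $g\sqsubseteq f$ gives $Tf=Tg+T(f-g)\geq Tg$, whence $T^{+}=T$; as $\mathcal{F}_{x}\cap D\subseteq\mathcal{F}_{x}$ this yields $0\leq\pi^{D}T(x)\leq T^{+}(x)=T(x)$, which also shows $\pi^{D}T$ sends order bounded sets to order bounded sets.

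For orthogonal additivity I fix $x_{1}\bot x_{2}$. The structural tool is that fragments of a disjoint sum split, $\mathcal{F}_{x_{1}+x_{2}}=\{y_{1}+y_{2}:y_{1}\in\mathcal{F}_{x_{1}},\ y_{2}\in\mathcal{F}_{x_{2}}\}$, where any such $y_{1},y_{2}$ are automatically disjoint since $|y_{i}|\leq|x_{i}|$ and $|x_{1}|\wedge|x_{2}|=0$ (a standard property of fragments, provable via the Riesz decomposition). Because $D$ is a lateral ideal, a fragment $y=y_{1}+y_{2}$ of $x_{1}+x_{2}$ lies in $D$ if and only if $y_{1},y_{2}\in D$: the pieces $y_{1},y_{2}$ are themselves fragments of $y$, so they inherit membership by condition~(1) of Definition~\ref{def:adm}, and the converse is condition~(2). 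As $T(y_{1}+y_{2})=Ty_{1}+Ty_{2}$, the index set becomes the product $(\mathcal{F}_{x_{1}}\cap D)\times(\mathcal{F}_{x_{2}}\cap D)$, and the identity $\sup\{a+b\}=\sup\{a\}+\sup\{b\}$ for upward directed families bounded above in the Dedekind complete lattice $F$ gives $\pi^{D}T(x_{1}+x_{2})=\pi^{D}T(x_{1})+\pi^{D}T(x_{2})$.

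To prove $\rho\pi^{D}T\in\mathcal{F}_{T}$ I would first treat $\rho=I$, writing $Q:=\pi^{D}T$ and showing $Q\wedge(T-Q)=0$. By Theorem~\ref{th-1}(2) this meet at $f$ is $\inf\{Q(g_{1})+(T-Q)(g_{2}):f=g_{1}\sqcup g_{2}\}\geq 0$, since $0\leq Q\leq T$. For the reverse bound I note that $\{Ty:y\in\mathcal{F}_{f}\cap D\}$ is upward directed (for $y,y'$ in it, the Boolean join $y\vee y'$ computed in $\mathcal{F}_{f}$ again lies in $\mathcal{F}_{f}\cap D$ by conditions~(1) and~(2) and dominates both), so there is a net $y_{\alpha}\in\mathcal{F}_{f}\cap D$ with $Ty_{\alpha}\uparrow Q(f)$. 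Testing the infimum on $f=y_{\alpha}\sqcup(f-y_{\alpha})$ produces two terms: $(T-Q)(y_{\alpha})=Ty_{\alpha}-Q(y_{\alpha})=0$, because $y_{\alpha}\in D$ forces $Q(y_{\alpha})=Ty_{\alpha}$; and $Q(f-y_{\alpha})=\sup\{Ty:y\in\mathcal{F}_{f-y_{\alpha}}\cap D\}\leq Q(f)-Ty_{\alpha}$, since for each such $y$ the disjoint sum $y_{\alpha}+y$ lies in $\mathcal{F}_{f}\cap D$ and so $Ty_{\alpha}+Ty\leq Q(f)$. Hence $(Q\wedge(T-Q))(f)\leq\inf_{\alpha}(Q(f)-Ty_{\alpha})=0$, giving $Q\in\mathcal{F}_{T}$.

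Finally, for general $\rho\in\mathfrak{B}(F)$ I would decompose $T-\rho Q=(T-Q)+(I-\rho)Q$ and establish the two disjointness relations $\rho Q\wedge(T-Q)=0$ and $\rho Q\wedge(I-\rho)Q=0$ separately; since in the vector lattice $\mathcal{U}(E,F)$ the disjoint complement of $\rho Q$ is a band, hence closed under addition, these combine to $\rho Q\wedge(T-\rho Q)=0$, which is the claim (note $\rho Q\in\mathcal{U}_{+}(E,F)$, being $\rho$ composed with $Q$ and satisfying $0\leq\rho Q\leq Q$). The first relation is immediate from $0\leq\rho Q\leq Q$ and $Q\wedge(T-Q)=0$. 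For the second I would test $(\rho Q\wedge(I-\rho)Q)(f)$ on the extreme decompositions $f=f\sqcup 0$ and $f=0\sqcup f$, obtaining the bound $\rho Q(f)\wedge(I-\rho)Q(f)$, which vanishes because $\rho Q(f)$ and $(I-\rho)Q(f)$ lie in the complementary bands $\rho F$ and $(I-\rho)F$ of $F$. I expect the $\rho=I$ step to be the main obstacle: one must control the possibly non-attained supremum through the directed net $y_{\alpha}$ and confirm that both resulting terms decrease to zero in order. Once that is in place, the band-projection case follows cleanly from the two-decomposition trick and the fact that disjoint complements are bands.
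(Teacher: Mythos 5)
There is nothing in the paper to compare your proof against: the lemma is imported verbatim from \cite{Ben} (Lemma~3.6 there) and stated without proof, so your proposal can only be judged on its own merits. On those merits it is correct, and it follows what is surely the intended route: positivity and order boundedness from the pointwise bounds $0\leq\pi^{D}T\leq T$ (your observation that $T^{+}=T$ for a positive orthogonally additive $T$, via Theorem~\ref{th-1}(3), is the right way to get the upper bound); orthogonal additivity from the splitting of fragments of a disjoint sum combined with the two conditions of Definition~\ref{def:adm}; the fragment property first for $\rho=Id_{F}$ by testing the infimum formula of Theorem~\ref{th-1}(2) on the decompositions $f=y_{\alpha}\sqcup(f-y_{\alpha})$ along a directed net with $Ty_{\alpha}\uparrow\pi^{D}T(f)$, where $(T-\pi^{D}T)(y_{\alpha})=0$ because $y_{\alpha}\in D$ and $\pi^{D}T(f-y_{\alpha})\leq\pi^{D}T(f)-Ty_{\alpha}$ because $y_{\alpha}+z\in\mathcal{F}_{f}\cap D$ for every $z\in\mathcal{F}_{f-y_{\alpha}}\cap D$; and the general case by writing $T-\rho\pi^{D}T=(T-\pi^{D}T)+(Id_{F}-\rho)\pi^{D}T$ and using that an element disjoint from two positive elements is disjoint from their sum.

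Two places where you assert standard facts rather than prove them deserve a line each in a written-out version. First, the splitting $\mathcal{F}_{x_{1}+x_{2}}=\{y_{1}+y_{2}:\,y_{i}\in\mathcal{F}_{x_{i}}\}$ for $x_{1}\bot x_{2}$: the nontrivial inclusion is obtained, for positive elements, by taking $y_{i}=y\wedge x_{i}$ and checking $y_{1}+y_{2}=y\wedge(x_{1}\vee x_{2})=y$ and $y_{i}\wedge(x_{i}-y_{i})\leq y\wedge(x_{1}+x_{2}-y)=0$; the signed case reduces to this because $y\sqsubseteq x$ iff $y^{+}\sqsubseteq x^{+}$ and $y^{-}\sqsubseteq x^{-}$. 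Your pointer to ``the Riesz decomposition'' is not quite the right tool, since one needs the pieces to be fragments, not merely dominated elements. Second, in your directedness argument the phrase ``dominates both'' must be read in the lateral order $\sqsubseteq$, not in the vector lattice order: a positive orthogonally additive operator is not monotone for $\leq$, and what you actually use is that $y\sqsubseteq z$ implies $Tz=Ty+T(z-y)\geq Ty$. With the Boolean join written as $z=y+w$, where $w\sqsubseteq y'$ is the Boolean complement of the meet of $y$ and $y'$ inside $y'$ (so $w\in D$ by condition (1), $w\bot y$, hence $z\in D$ by condition (2), and $y\sqsubseteq z$, $y'\sqsubseteq z$), this is exactly what holds. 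With these two readings made explicit, your argument is complete.
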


If $D=\mathcal{F}_{x}$ then  the
operator $\pi^{D}T$ is denoted by $\pi^{x}T$. Let $F$ be a vector lattice.
Recall that a family of mutually disjoint order projections  $(\rho_{\xi})_{\xi\in\Xi}$ on $F$ is said to be {\it partition of unity} if $\bigvee\limits_{\xi\in\Xi}(\rho_{\xi})_{\xi\in\Xi}=Id_{F}$.
Any fragment of the form $\sum_{i=1}^{n}\limits\rho_{i}\pi^{x_{i}}T$, $n\in\Bbb{N}$, where $\rho_{1},\dots,\rho_{n}$ is a  finite family of mutually disjoint order projections in $F$, like in  the linear case is
called an {\it elementary} fragment  $T$. The set of all elementary fragments of $T$ we denote by $\mathcal{A}_{T}$.

The following theorem (\cite{Pl-7}, Theor.~3.14) is described the structure of Boolean algebras of fragments of a positive abstract Uryson operator.

\begin{thm}(\cite{Pl-7}, Theor.~3.14).\label{frag-2}
Let $E,F$ be vector lattices, $F$ Dedekind complete, $T\in\mathcal{U}_{+}(E,F)$ and $S\in\mathcal{F}_{T}$.
Then $S\in\mathcal{A}_{T}^{\uparrow\downarrow\uparrow}$.
\end{thm}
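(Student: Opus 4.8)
The plan is to obtain $S$ by three successive monotone passages from the elementary fragments, mirroring the classical ``up-and-down'' argument for components of a positive linear operator, with the localizations $\pi^{x}T$ playing the role of the simple operators. First I would record the structural preliminary that $\mathcal{A}_{T}$ is a sublattice of the Boolean algebra $\mathcal{F}_{T}$: given two elementary fragments, refine their two finite families of mutually disjoint projections to a common one and use the meet/join formulas (1),(2) of Theorem~\ref{th-1} together with Lemma~\ref{le:01} to check that $R_{1}\vee R_{2}$ and $R_{1}\wedge R_{2}$ are again of the elementary form $\sum_{i}\rho_{i}\pi^{x_{i}}T$. This guarantees that $\mathcal{A}_{T}^{\uparrow}$ is closed under finite infima and $\mathcal{A}_{T}^{\downarrow}$ under finite suprema, so that every ``up'' and ``down'' of $\mathcal{A}_{T}$ is a genuinely directed net, which is what makes the iterated order limits meaningful.

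The analytic heart is a pointwise squeezing coming from the disjointness $S\wedge(T-S)=0$. Fixing $f\in E_{+}$ and applying the infimum formula (2) of Theorem~\ref{th-1} to $S\wedge(T-S)$, for each ``error'' $\varepsilon\in F_{+}$ I would choose a lateral decomposition $f=g_{1}\sqcup g_{2}$ with $Sg_{1}+(T-S)g_{2}\leq\varepsilon$. Orthogonal additivity then gives $|Sf-Tg_{2}|\leq\varepsilon$ with $g_{2}\sqsubseteq f$, and since every fragment of $g_{2}$ is a fragment of $f$ one has $Tg_{2}=\pi^{g_{2}}T(f)$, where $\pi^{g_{2}}T\in\mathcal{A}_{T}$ by Lemma~\ref{le:01}. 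Thus $Sf$ is approximated, to within $\varepsilon$ and from both sides, by values of elementary fragments. The task is then to upgrade these $f$-by-$f$ approximations, in which the approximating elementary fragment depends on $f$, into monotone order convergence of operators; this is where the $F$-side order projections enter, used to paste together the locally chosen decompositions over disjoint bands of $F$ through the partition-of-unity construction behind $\mathcal{A}_{T}$.

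Assembling the three closures: from the upper halves of the squeezings I would build, for each piece of $S$, a downward directed net of elements of $\mathcal{A}_{T}^{\uparrow}$ decreasing to that piece, placing it in $\mathcal{A}_{T}^{\uparrow\downarrow}$; taking the supremum of these pieces over an upward directed family exhausting $S$ then yields $S\in(\mathcal{A}_{T}^{\uparrow\downarrow})^{\uparrow}=\mathcal{A}_{T}^{\uparrow\downarrow\uparrow}$ by definition of the iterated closure, the identities $\mathcal{A}^{\uparrow\uparrow}=\mathcal{A}^{\uparrow}$ and $\mathcal{A}^{\downarrow\downarrow}=\mathcal{A}^{\downarrow}$ from the text keeping the bookkeeping finite. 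The main obstacle, and the step I expect to absorb most of the work, is precisely this globalization together with the verification that three monotone passages suffice: one must show that the two-sided pointwise estimate $|Sf-\pi^{g}T(f)|\leq\varepsilon$ can be organized into honest order-convergent monotone nets of operators, and that the resulting class $\mathcal{A}_{T}^{\uparrow\downarrow\uparrow}$ is already stable under the relevant order limits (an interchange argument for $\uparrow$ and $\downarrow$ resting on the Dedekind completeness of $F$ and the sublattice property of Step~1). Once stability and two-sided density are in hand, the three-fold closure is forced and the inclusion $S\in\mathcal{A}_{T}^{\uparrow\downarrow\uparrow}$ follows.
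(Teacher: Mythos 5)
The paper itself offers no proof to compare against: Theorem~\ref{frag-2} is stated as a citation of (\cite{Pl-7}, Theorem~3.14), so your proposal can only be judged on its own terms, against the de Pagter--type ``up-and-down'' argument it is modelled on. Judged that way, it has a genuine gap at what you call its analytic heart. From $(S\wedge(T-S))(f)=\inf\{Sg_{1}+(T-S)g_{2}:\,f=g_{1}\sqcup g_{2}\}=0$ you propose to choose, for every $\varepsilon\in F_{+}$, a single decomposition with $Sg_{1}+(T-S)g_{2}\leq\varepsilon$. This inference is false in a general Dedekind complete vector lattice: an infimum equal to zero, even of a downward directed set, does not produce elements of the set below a prescribed positive $\varepsilon$. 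For instance, in $F=\ell^{\infty}$ the decreasing sequence $x_{n}=\chi_{\{k:\,k>n\}}$ satisfies $\inf_{n}x_{n}=0$, yet no $x_{n}$ lies below $\varepsilon=\frac{1}{2}\mathbf{1}$. This failure is not a technicality; it is exactly why the up-down-up theorem (de Pagter's linear original and the orthogonally additive version in \cite{Pl-7}) is proved through band-projection splitting lemmas --- given $x_{\alpha}\downarrow 0$, $u\in F_{+}$ and a real $\varepsilon>0$, one can only split off a band projection $\rho$ of $F$ on which some $x_{\alpha}$ is dominated by $\varepsilon u$, losing control on the complementary band --- and it is why the projections $\rho_{i}$ of $F$ appear in the very definition of the elementary fragments $\sum_{i}\rho_{i}\pi^{x_{i}}T$. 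Your text acknowledges that ``the $F$-side order projections enter'' but never replaces the invalid $\varepsilon$-extraction by such a lemma.

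Second, even granting the pointwise estimate $|Sf-\pi^{g_{2}}T(f)|\leq\varepsilon$, everything that would actually yield membership in $\mathcal{A}_{T}^{\uparrow\downarrow\uparrow}$ --- converting $f$-dependent approximants into monotone, order-convergent nets of operators, and showing that exactly three passages suffice --- is deferred in your final paragraph (``the step I expect to absorb most of the work''), together with an unproved assertion that $\mathcal{A}_{T}^{\uparrow\downarrow\uparrow}$ is stable under the relevant order limits; iterated ups and downs do not stabilize for free, and that three passages are enough is the content of the theorem, not a bookkeeping fact. Your Step~1 is also incomplete: refining the two partitions of unity reduces the lattice-closure of $\mathcal{A}_{T}$ to the claim that $\pi^{x}T\vee\pi^{y}T$ and $\pi^{x}T\wedge\pi^{y}T$ are again of the form $\pi^{z}T$ (or elementary), which is itself a statement requiring proof about lateral ideals, not a consequence of the refinement. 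As it stands, the proposal is a programme whose one concrete analytic step fails in general $F$ and whose remaining steps are postulated; it does not establish the theorem.
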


\section{Result}
\label{sec5}

In this section we consider a domination problem for $AM$-compact abstract Uryson operators. In the classical sense, the domination problem can be stated as follows. Let $E$, $F$ be vector lattices, $S,T: E \to F$ linear operators with $0 \leq S \leq T$. Let $\mathcal P$ be some property of linear operators $R: E \to F$, so that $\mathcal P(R)$ means that $R$ possesses $\mathcal P$. Does $\mathcal P(T)$ imply $\mathcal P(S)$?

Let $E$ be a vector lattice and $x\in E_{+}$. The order ideal generated by $x$ we denote by $E_{x}$. The following theorem is an important tool for further considerations.  An  {\it $x$-step function}
is any vector $s\in E$ for which there exist pairwise disjoint fragments $x_{1},\dots,x_{n}$ of $x$ with $x=\bigsqcup\limits_{i=1}^{n}x_{i}$ and real numbers $\lambda_{1},\dots,\lambda_{n}$ satisfying $s=\sum\limits_{i=1}^{n}\lambda_{i}x_{i}$.

\begin{thm}(Freudenthal Spectral Theorem)(\cite{Al}, Theorem~2.8).\label{Fr}
Let $E$ be a vector lattice with the principal projection property
and let $x\in E_{+}$. Then for every $y\in E_{x}$
there exists a sequence $(u_{n})$ of $x$-step functions satisfying
$0\leq y-u_{n}\leq\frac{1}{n}x$ for each $n$ and $u_{n}\uparrow y$.
\end{thm}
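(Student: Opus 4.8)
The plan is to give the classical component-wise construction, exploiting that the principal projection property supplies exactly the band projections the proof requires. First I would reduce to the normalized case $0\le y\le x$: since $y\in E_{x}$ there is $\lambda>0$ with $|y|\le\lambda x$, and translating by the $x$-step function $\lambda x$ and rescaling (which only alters the error term by a fixed factor, absorbed later by refining the mesh) reduces matters to an element with $0\le y\le x$. In this setting the $x$-step functions are precisely the finite combinations $\sum_{i}\lambda_{i}x_{i}$ with the $x_{i}$ pairwise disjoint fragments of $x$ summing to $x$, and the relevant fragments of $x$ are the images $Px$ of band projections $P$ on $E$.

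Fix $n$. For $k=1,\dots,n$ let $P_{k}$ be the band projection onto the band generated by the positive element $\bigl(y-\tfrac{k-1}{n}x\bigr)^{+}$, which exists by the principal projection property, and set $P_{n+1}=0$. Since the elements $\bigl(y-\tfrac{k-1}{n}x\bigr)^{+}$ decrease in $k$, we get $P_{1}\ge P_{2}\ge\dots\ge P_{n}\ge P_{n+1}=0$, so the vectors $x_{k}:=(P_{k}-P_{k+1})x$ are pairwise disjoint fragments of $x$ with $\sum_{k=1}^{n}x_{k}=P_{1}x$. I would then put $u_{n}:=\sum_{k=1}^{n}\tfrac{k-1}{n}\,x_{k}$, which is an $x$-step function once one adjoins the complementary fragment $(I-P_{1})x$ with coefficient $0$, since then $x=(I-P_{1})x\sqcup\bigsqcup_{k=1}^{n}x_{k}$.

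The heart of the argument is the two-sided estimate $\tfrac{k-1}{n}\,x_{k}\le P_{x_{k}}y\le\tfrac{k}{n}\,x_{k}$, where $P_{x_{k}}$ is the band projection onto the band generated by $x_{k}$. This follows from the defining behaviour of a band projection $P$ on a difference $a-b$: on the range of $P$ one has $P(a-b)=(a-b)^{+}\ge0$, while on the range of $I-P$ one has $(I-P)(a-b)=-(a-b)^{-}\le0$, because $(a-b)^{+}$ and $(a-b)^{-}$ are disjoint. Applying this with $P=P_{k}$, $a=y$, $b=\tfrac{k-1}{n}x$ gives $P_{k}y\ge\tfrac{k-1}{n}P_{k}x$; applying it with $P=P_{k+1}$, $b=\tfrac{k}{n}x$ gives $(I-P_{k+1})y\le\tfrac{k}{n}(I-P_{k+1})x$. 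Since $x_{k}$ lies simultaneously in the range of $P_{k}$ and in the range of $I-P_{k+1}$, both inequalities hold after projecting onto the band of $x_{k}$. Summing over the disjoint pieces and using $(I-P_{1})y=0$ (as $y$ generates the band of $P_{1}$) yields $0\le y-u_{n}\le\tfrac{1}{n}P_{1}x\le\tfrac{1}{n}x$.

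Finally I would upgrade this relative-uniform estimate to monotone order convergence. The bound $y-u_{n}\le\tfrac1n x$ already forces $u_{n}\to y$ in order, and replacing $u_{n}$ by the running suprema $\widetilde{u}_{n}=\bigvee_{k\le n}u_{k}$ — still $x$-step functions, since a finite supremum can be recomputed on a common refinement of the underlying fragment partitions of $x$ — produces an increasing sequence with $0\le y-\widetilde{u}_{n}\le\tfrac1n x$ and $\widetilde{u}_{n}\uparrow y$. The main obstacle is the estimate of the third paragraph: everything hinges on having the projections $P_{k}$ at our disposal, which is exactly the role of the principal projection property, and on correctly reading off from $\bigl(y-\tfrac{k-1}{n}x\bigr)^{\pm}$ the sign information that a band projection extracts. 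By comparison the initial normalization and the monotonicity repair are routine.
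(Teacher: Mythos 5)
The paper itself contains no proof of this statement: it is imported verbatim from \cite{Al} (Theorem~2.8) as a tool for the proof of the main domination theorem (Theorem~\ref{thm:dom}), so there is no internal argument to compare yours against; the relevant comparison is with the standard textbook proof, and your construction is essentially that proof, correctly executed. The normalization to $0\le y\le x$ is legitimate ($x$-step functions form a linear space, via common refinements of fragment partitions). The key estimates hold for exactly the reason you give: since $P_{k}$ is the band projection onto the band generated by $\bigl(y-\frac{k-1}{n}x\bigr)^{+}$, one has $P_{k}y\ge\frac{k-1}{n}P_{k}x$ and $(I-P_{k+1})y\le\frac{k}{n}(I-P_{k+1})x$, and applying the positive operators $I-P_{k+1}$, respectively $P_{k}$, localizes both inequalities to $Q_{k}:=P_{k}-P_{k+1}$, giving $\frac{k-1}{n}Q_{k}x\le Q_{k}y\le\frac{k}{n}Q_{k}x$; together with $(I-P_{1})y=0$ this telescopes to $0\le y-u_{n}\le\frac{1}{n}P_{1}x\le\frac{1}{n}x$. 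The monotonicity repair by running suprema is also sound: under the principal projection property every fragment of $x$ is the image of $x$ under a band projection, two fragment partitions admit a common refinement, and suprema of step functions are computed coefficientwise on that refinement.

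Two steps are left implicit and deserve a line each. First, to conclude $u_{n}\uparrow y$ --- that $y$ is the \emph{supremum} of the $\widetilde{u}_{n}$, not merely an upper bound --- from $0\le y-\widetilde{u}_{n}\le\frac{1}{n}x$ you need $\inf_{n}\frac{1}{n}x=0$, i.e.\ that $E$ is Archimedean. This is not a hypothesis of the theorem, but it is a standard fact that the principal projection property implies the Archimedean property, so the step is valid once that fact is invoked; as written, your ``already forces $u_{n}\to y$ in order'' silently uses it. Second, after undoing the normalization the error bound becomes $\frac{2\lambda}{n}x$, so to get literally $\frac{1}{n}x$ you must pass to a subsequence (mesh at least $2\lambda n$), which preserves monotonicity; your phrase ``absorbed later by refining the mesh'' covers this, but it should be spelled out. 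Neither point is a genuine gap.
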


\begin{definition}
Let $E$ be a vector lattice and $F$ a Banach space. An orthogonally additive operator $T:E\to F$ is called:
\begin{enumerate}
    \item \textit{AM-compact} if for every order bounded set $M \subset E$ its image $T(M)$ is a relatively compact set in $F$;
  \item \textit{C-compact} if the set  $T(\mathcal{F}_{x})$ is relatively compact in $F$ for every $x\in E$.
\end{enumerate}
\end{definition}

\begin{thm} (\cite{Maz-2},Theor.~3.5)\label{thm:Seg-1}
Let $E,F$ be a Banach function spaces, $F$ having order continuous norm. Then every   integral Uryson operator  $T\in\mathcal{U}(E,F)$ is $AM$-compact.
\end{thm}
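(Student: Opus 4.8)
The plan is to reduce the statement to a compactness criterion in order continuous Banach function spaces, for which two ingredients suffice: a domination (order) bound on $T(M)$ and relative compactness in measure. Let $M\subset E$ be order bounded, say $|f|\le e$ for all $f\in M$ with $e\in E_{+}$. Since $T$ is an abstract Uryson operator it is order bounded, so $T(M)\subseteq T([-e,e])$ is contained in an order interval $[-g,g]$ with $g\in F_{+}$; this is the domination bound, and it uses nothing beyond the definition of $\mathcal{U}(E,F)$. The first thing I would record is a consequence of order continuity of $\|\cdot\|_{F}$: whenever $\chi_{D_{n}}\downarrow 0$ in $L_{0}(\mu)$ one has $\|\chi_{D_{n}}g\|_{F}\to 0$, hence $\sup_{f\in M}\|\chi_{D_{n}}Tf\|_{F}\le\|\chi_{D_{n}}g\|_{F}\to 0$, i.e. the family $T(M)$ has uniformly order continuous (equi-absolutely continuous) norms.

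The second, and decisive, step is to show that $T(M)$ is relatively compact in measure. Granting this, the proof concludes quickly: given any sequence $(Tf_{n})\subset T(M)$, relative compactness in measure yields a subsequence converging in measure, and by $\sigma$-finiteness of $\mu$ together with a diagonal extraction I may assume it converges $\mu$-a.e. to some $h$; since every term is dominated by $g\in F$, so is $h$, and order continuity of $\|\cdot\|_{F}$ gives the dominated convergence estimate $\|Tf_{n_{k}}-h\|_{F}\to 0$. Thus every sequence in $T(M)$ has a norm convergent subsequence and $T(M)$ is relatively norm compact.

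To prove relative compactness in measure I would approximate $T$, uniformly over $M$, by integral operators with finite dimensional range. Writing $\omega(s,t)=\sup_{|r|\le e(t)}|K(s,t,r)|$ (measurable by $(C_{1})$ and the continuity $(C_{2})$, taking the supremum over a countable dense set of $r$'s) one has $g(s)=\int_{B}\omega(s,t)\,d\nu(t)$, so by $\sigma$-finiteness I first truncate the $s$- and $t$-variables to finite measure sets $A_{0},B_{0}$ at the cost of terms that are small in measure. On the finite box $A_{0}\times B_{0}$, restricting the argument to $\{e\le N\}$, a Luzin-type argument for the Carathéodory function $K$ produces kernels $K_{m}(s,t,r)=\sum_{i}\chi_{A_{i}}(s)\widetilde{K}_{i}(t,r)$ that are step functions in the $s$-variable, with $\eta_{m}(s,t):=\sup_{|r|\le e(t)}|K(s,t,r)-K_{m}(s,t,r)|$ controlled. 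The corresponding operators satisfy $(T_{m}f)(s)=\sum_{i}\chi_{A_{i}}(s)\int_{B_{0}}\widetilde{K}_{i}(t,f(t))\,d\nu(t)$, so $T_{m}(M)$ lies in the finite dimensional space $\operatorname{span}\{\chi_{A_{i}}\}$ and is bounded, hence relatively compact; meanwhile $|Tf-T_{m}f|\le\int_{B_{0}}\eta_{m}\,d\nu+\int_{B\setminus B_{0}}\omega\,d\nu$ uniformly in $f\in M$, which transfers relative compactness from $T_{m}(M)$ to $T(M)$. The Freudenthal Spectral Theorem (Theorem~\ref{Fr}) can enter here to discretise the $r$-variable: each $f\in[-e,e]\subset E_{e}$ is $\tfrac1n e$-approximated by $e$-step functions, so only finitely many value-levels of the argument need to be considered.

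The main obstacle is exactly this uniform-in-$f$ approximation by simple kernels. The difficulty is that the Carathéodory continuity $(C_{2})$ is only pointwise in $(s,t)$ and provides no modulus of continuity in $r$ uniform in $t$, so one cannot naively freeze $s$ or discretise $r$; the Luzin reduction on sets of large measure is what repairs this. It is then the order continuity of the norm of $F$ that converts the estimates, which are natural only in measure or in the $L_{1}(\nu)$-norm of the inner integral, into genuine $\|\cdot\|_{F}$ estimates. Everything else is bookkeeping with the dominating function $g$.
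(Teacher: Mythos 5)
The paper itself contains no proof of Theorem~\ref{thm:Seg-1}: it is imported verbatim from the cited source (\cite{Maz-2}, Theorem~3.5), so there is no in-paper argument to compare yours against; the comparison has to be with the classical proof in that reference. Your architecture is exactly that classical route: dominate $T(M)$ by a single element of $F$, prove relative compactness in measure of $T(M)$ by approximating the kernel (truncation to finite-measure boxes, a Scorza-Dragoni/Luzin reduction, finite-rank operators that are piecewise constant in $s$), and then use order continuity of the norm of $F$ to upgrade dominated a.e.\ convergent subsequences to norm convergent ones. The closing dominated-convergence step is correct, and it in fact makes your first paragraph (equi-order-continuity of the norms of $T(M)$) redundant; the appeal to the Freudenthal theorem is likewise unnecessary, since uniform continuity on the compact sets produced by Scorza-Dragoni already handles the $r$-variable.

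There is, however, one point you claim for free that genuinely costs work, and it is the hinge of the whole proof: the dominant $g$. What ``nothing beyond the definition of $\mathcal{U}(E,F)$'' gives you is an abstract majorant $h\in F_{+}$ with $|Tf|\le h$ for all $f\in[-e,e]$; it is \emph{not} the pointwise kernel dominant $g(s)=\int_{B}\omega(s,t)\,d\nu(t)$ with $\omega(s,t)=\sup_{|r|\le e(t)}|K(s,t,r)|$, and your truncation estimate $|Tf-T_{m}f|\le\int_{B_{0}}\eta_{m}\,d\nu+\int_{B\setminus B_{0}}\omega\,d\nu$ requires the latter: unless $\int_{B}\omega(\cdot,t)\,d\nu(t)$ is finite a.e.\ (better, lies in $F$), the tail term $\int_{B\setminus B_{0}}\omega\,d\nu$ need not become small and the approximation scheme collapses. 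This finiteness does not follow from $e\in\mathrm{Dom}_{B}(K)$, since the obvious bound on $\int_{B}\max_{i\le n}|K(s,t,q_{i}e(t))|\,d\nu(t)$ is the divergent sum $\sum_{i\le n}h_{q_{i}e}(s)$. What is actually needed is the theorem (proved in \cite{Maz-2}, and implicitly what makes their Theorem~3.5 work) that the modulus $|T|$ is again an integral Uryson operator with kernel $|K|$, combined with a measurable-selection plus monotone-convergence argument: realize each finite maximum $\max_{i\le n}|K(s,t,q_{i}e(t))|$ as $|K(s,t,f(t))|$ for a measurable selection $f$ with $|f|\le e$, bound $\int_{B}|K(s,t,f(t))|\,d\nu(t)$ by the order bound of $|T|$ on $[-e,e]$ (which exists because $F$, having order continuous norm, is Dedekind complete, so $|T|\in\mathcal{U}(E,F)$ by Theorem~\ref{th-1}), and let $n\to\infty$. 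With that lemma inserted your proof is complete and faithful to the cited source's; without it, the central compactness-in-measure step rests on an unproved and nonobvious domination.
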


The next  theorem is  the  main result of the article.
\begin{thm} \label{thm:dom}
Let $E,F$ be  Banach  lattices,   $F$ having  order continuous norm, and  $T\in\mathcal{U}_{+}(E,F)$ be an $AM$-compact operator. Then every   operator $S\in\mathcal{U}_{+}(E,F)$, such that $0\leq S\leq T$ is $AM$-compact.
\end{thm}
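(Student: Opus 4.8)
The plan is to prove the theorem in three stages: a reduction to order intervals, a structural lemma showing that every fragment of $T$ is $AM$-compact, and a Freudenthal approximation that reaches the whole interval $[0,T]$.

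First I would reduce the problem. Since $0\le S\le T$ in $\mathcal{U}(E,F)$ means $0\le Sf\le Tf$ for \emph{every} $f\in E$ (not only for $f\ge 0$), and since any order bounded set is contained in an interval $[-x,x]$ with $x\in E_+$, it suffices to show that $S([-x,x])$ is relatively compact for each $x\in E_+$; the image of an arbitrary order bounded set then sits inside such a set and is relatively compact as well. A first consequence of $0\le Sf\le Tf$ is that $S([-x,x])$ is almost order bounded: as $T([-x,x])$ is relatively compact it is almost order bounded, i.e.\ for each $\varepsilon>0$ there is $u_\varepsilon\in F_+$ with $\|(Tf-u_\varepsilon)^+\|\le\varepsilon$ for all $f\in[-x,x]$, and then $(Sf-u_\varepsilon)^+\le (Tf-u_\varepsilon)^+$ transfers the estimate to $S$. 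Almost order boundedness alone is not enough in an order continuous lattice, so the real work is to control the operator itself.

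The heart of the argument is to show that every fragment $R\in\mathcal{F}_T$ is $AM$-compact. For an elementary fragment $\rho\pi^{x}T$ I would argue directly: for $f\in[-y,y]$ every common fragment $g$ of $f$ and $x$ satisfies $|g|\le|f|\le y$, so $Tg\in T([-y,y])=:K$, and $\pi^{x}T(f)=\sup\{Tg:\ g\in\mathcal{F}_f\cap\mathcal{F}_x\}$ is the supremum of an upward directed subset of $K$ (directedness coming from $T(g_1\vee g_2)\ge Tg_i$ for common fragments). Order continuity of $F$ turns this increasing net into a norm-convergent one, whence $\pi^{x}T(f)\in\overline{K}$; thus $\pi^{x}T([-y,y])\subseteq\overline{T([-y,y])}$ is relatively compact, and applying the order projection $\rho$ (a lattice homomorphism of norm at most one) and forming finite disjoint sums preserves this. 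To pass from elementary fragments to an arbitrary $R\in\mathcal{F}_T$ I would invoke the ``Up-and-down'' Theorem~\ref{frag-2}, which places $R$ in $\mathcal{A}_T^{\uparrow\downarrow\uparrow}$, together with the fact that all the operators occurring in these monotone limits are dominated by $T$; order continuity of $F$ then converts the order limits into norm limits pointwise. Upgrading this pointwise control to relative compactness of $R([-y,y])$, uniformly over the interval, is the step I expect to be the main obstacle, and it is precisely here that the combination of the domination $0\le R\le T$, the relative compactness of $T([-y,y])$, and order continuity of $F$ must be used in a Dini-type fashion.

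Granting that every fragment of $T$ is $AM$-compact, the conclusion follows quickly. By Theorem~\ref{th-1} the space $\mathcal{U}(E,F)$ is a Dedekind complete vector lattice, hence has the principal projection property, and $0\le S\le T$ says that $S$ lies in the order ideal generated by $T$. Applying the Freudenthal Spectral Theorem~\ref{Fr} in $\mathcal{U}(E,F)$ to the pair $(T,S)$ produces $T$-step functions $U_n=\sum_{i=1}^{k_n}\lambda_i^{(n)}T_i^{(n)}$, with the $T_i^{(n)}$ pairwise disjoint fragments of $T$, satisfying $0\le S-U_n\le\frac1n T$ and $U_n\uparrow S$. Each $U_n$ is a finite nonnegative combination of $AM$-compact operators, hence $AM$-compact. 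Finally, the operator inequalities $0\le S-U_n\le\frac1n T$ give $0\le(S-U_n)(f)\le\frac1n Tf$ for all $f$, so by boundedness of $T([-x,x])$ one has $\sup_{f\in[-x,x]}\|Sf-U_nf\|\le\frac1n\sup_{f\in[-x,x]}\|Tf\|\to0$; since $U_n([-x,x])$ is relatively compact and the approximation is uniform on $[-x,x]$, the set $S([-x,x])$ is totally bounded, hence relatively compact, and $S$ is $AM$-compact.
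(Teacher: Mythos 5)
Your overall architecture coincides with the paper's: show that the elementary fragments $\rho\pi^{x}T$ are $AM$-compact, use the up-and-down Theorem~\ref{frag-2} to reach all fragments of $T$, then use the Freudenthal Spectral Theorem~\ref{Fr} in the Dedekind complete vector lattice $\mathcal{U}(E,F)$ to reach every $S$ with $0\le S\le T$ by relatively uniform approximation. Your treatment of the first and last stages is sound, and in fact more careful than the paper's: your closure argument showing $\pi^{x}T(M)\subseteq\overline{T([-y,y])}$ repairs the paper's literal (and strictly speaking false) claim that $\pi^{x}T(M)\subset T(\mathcal{F}_{x})$, and your explicit estimate $0\le(S-U_n)(f)\le\frac{1}{n}Tf$ for all $f\in E$, giving uniform norm convergence $U_n\to S$ on order bounded sets, makes precise the paper's one-sentence Freudenthal endgame.

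However, the middle stage --- passing $AM$-compactness through the monotone limits in $\mathcal{A}_T^{\uparrow\downarrow\uparrow}$ --- is exactly where you stop, declaring it ``the main obstacle,'' and your guess at how to fill it (a Dini-type argument exploiting relative compactness of $T([-y,y])$) points in the wrong direction. The paper fills this gap with a purely lattice-theoretic uniformity statement, Lemma~\ref{l-1}: if $(G_\alpha)\subset\mathcal{U}_{+}(E,F)$ decreases to $0$ and $M\subset E$ is order bounded, then $v_\alpha:=\sup G_\alpha(M)$ exists in $F$ and $v_\alpha\downarrow 0$. Given $S\in\mathcal{A}_T^{\uparrow}$, write $|S-S_\alpha|\le G_\alpha\downarrow 0$ with $S_\alpha\in\mathcal{A}_T$; then by Theorem~\ref{th-1}(5), $\|Sx-S_\alpha x\|\le\|G_\alpha(x)\|\le\|v_\alpha\|$ for every $x\in M$, and order continuity of the norm of $F$ gives $\|v_\alpha\|\to 0$, i.e.\ \emph{uniform} norm convergence of $S_\alpha$ to $S$ on $M$. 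A standard $\varepsilon/3$ finite-net argument then transfers relative compactness from $S_\alpha(M)$ to $S(M)$, and iterating the same reasoning through the down and up stages covers all of $\mathcal{A}_T^{\uparrow\downarrow\uparrow}$, hence, by Theorem~\ref{frag-2}, all fragments of $T$. Note that the relative compactness of $T(M)$ plays no role at this stage: what is used is the compactness of the images under the already-established approximants $S_\alpha$ together with the uniformity supplied by Lemma~\ref{l-1}; pointwise order (or norm) convergence alone, which is all you derive, would not allow compactness to pass to the limit.
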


Recall that a family of mutually disjoint order projections  $(\rho_{\xi})_{\xi\in\Xi}$ on $F$ is said to be {\it partition of unity} if $\bigvee\limits_{\xi\in\Xi}(\rho_{\xi})_{\xi\in\Xi}=Id_{F}$.
For the proof we need an some auxiliary result.

\begin{lemma} \label{l-1}
Let $E,F$ be vector lattices with    $F$   Dedekind complete, $M\subset E$ be an order bounded set,  $(T_{\alpha})_{\alpha\in\Lambda}\subset\mathcal{U}_{+}(E,F)$ be a decreasing net of positive operators such that $\inf{(T_{\alpha})_{\alpha\in\Lambda}}=0$ and $v_{\alpha}=\sup\{T_{\alpha}(M)\}$. Then $(v_{\alpha})$ is the decreasing net of positive elements of $F$ and  $\inf{(v_{\alpha})_{\alpha\in\Lambda}}=0$.
\end{lemma}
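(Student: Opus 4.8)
\emph{The plan.} First I would record the easy structural claims. Each $v_\alpha=\sup\{T_\alpha(x):x\in M\}$ exists: $M$ is order bounded and every $T_\alpha$ is an abstract Uryson operator, hence order bounded, so $T_\alpha(M)$ is an order bounded subset of the Dedekind complete lattice $F$. For $\beta\ge\alpha$ the net is decreasing, i.e.\ $T_\alpha-T_\beta\in\mathcal{U}_{+}(E,F)$, which for orthogonally additive operators means $T_\beta(x)\le T_\alpha(x)$ for \emph{every} $x\in E$; taking suprema over $x\in M$ gives $v_\beta\le v_\alpha$, so $(v_\alpha)$ is decreasing, and $v_\alpha\ge 0$ because $T_\alpha(M)\subset F_{+}$. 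Put $v=\inf_\alpha v_\alpha\ge 0$; the whole point is to prove $v=0$.

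Next I would pass from the operators to their values. Since $M$ is order bounded I may fix $u\in E_{+}$ with $M\subseteq[-u,u]$. Writing $x=x^{+}-x^{-}$ for $x\in[-u,u]$ and using orthogonal additivity, $T_\alpha(x)=T_\alpha(x^{+})+T_\alpha(-x^{-})$ with $x^{+}\in[0,u]$ and $-x^{-}\in[-u,0]$, so $v_\alpha\le P_\alpha+Q_\alpha$, where $P_\alpha=\sup T_\alpha([0,u])$ and $Q_\alpha=\sup T_\alpha([-u,0])$. The reflection $R\mapsto R(-\,\cdot\,)$ is a lattice isomorphism of $\mathcal{U}(E,F)$, so $Q_\alpha$ is the analogue of $P_\alpha$ for the decreasing net $T_\alpha(-\,\cdot\,)$, which again has infimum $0$; and since $P_\alpha\downarrow 0$ together with $Q_\alpha\downarrow 0$ forces $P_\alpha+Q_\alpha\downarrow 0$ (a standard fact for two decreasing nets on the same index set), it suffices to treat $P_\alpha$. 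At this level the hypothesis $\inf_\alpha T_\alpha=0$ is usable pointwise: a decreasing net is downward directed, and the infimum of a downward directed family of positive orthogonally additive operators is computed pointwise (the pointwise infimum is itself orthogonally additive by directedness, and is the greatest lower bound). Hence $\inf_\alpha T_\alpha(y)=0$, i.e.\ $T_\alpha(y)\downarrow 0$, for every fixed $y\in[0,u]$.

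The remaining step is the heart of the matter and the step I expect to be the main obstacle: upgrading the pointwise convergence $T_\alpha(y)\downarrow 0$ (for each individual $y$) to the \emph{uniform} statement $P_\alpha=\sup_{y\in[0,u]}T_\alpha(y)\downarrow 0$, in which the element realizing the supremum may move with $\alpha$. My intended route is to observe that $u\mapsto\sup T_\alpha([0,u])$ is orthogonally additive on $E_{+}$ (via the Riesz decomposition of $y\in[0,u_{1}+u_{2}]$ as $y=(y\wedge u_{1})+(y\wedge u_{2})$ together with orthogonal additivity of $T_\alpha$), so that $P_\alpha$ is the value at $u$ of a positive operator $\widehat T_\alpha$, with $\widehat T_\alpha$ decreasing; then $\inf_\alpha\widehat T_\alpha=0$ would give $P_\alpha\downarrow 0$ and hence $0\le v\le\inf_\alpha(P_\alpha+Q_\alpha)=0$. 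Proving $\inf_\alpha\widehat T_\alpha=0$ is precisely the genuinely hard, non-formal point, a Dini-type uniformization over an order interval: pointwise order convergence of a decreasing net of positive orthogonally additive operators does \emph{not} in general control the supremum over an order interval, so I expect to have to use more than Dedekind completeness of $F$ here. This is where the standing hypotheses of the ambient theorem should enter, namely the order continuity of the norm of $F$ and the relative compactness furnished by $AM$-compactness, to convert the pointwise control into control of the supremum.
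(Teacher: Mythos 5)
Your write-up is, as you yourself flag, not a complete proof: the routine parts are correct and cleanly argued (existence of $v_{\alpha}$, monotonicity, positivity, the fact that the infimum of a decreasing net of positive orthogonally additive operators is computed pointwise, and the reduction to $P_{\alpha}=\sup T_{\alpha}([0,u])$), but the final ``Dini-type uniformization'' is left open. The decisive point, however, is that your suspicion about that step is right in the strongest possible sense: the lemma as stated is \emph{false}, so the gap cannot be closed by you or by anyone. Take $E=F=\mathbb{R}$, $M=[-1,1]$, let $\Lambda$ be the collection of finite subsets of $[-1,1]$ directed by inclusion, and for $A\in\Lambda$ define $T_{A}(x)=0$ if $x\in A\cup\{0\}$ and $T_{A}(x)=1$ otherwise. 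Each $T_{A}$ is orthogonally additive (in $\mathbb{R}$ two elements are disjoint only if one of them is $0$, so only $T_{A}(0)=0$ is needed), positive, and order bounded, hence $T_{A}\in\mathcal{U}_{+}(\mathbb{R},\mathbb{R})$. The net is decreasing, and its infimum in $\mathcal{U}(\mathbb{R},\mathbb{R})$ is $0$: any lower bound $S$ satisfies $S(x)\leq T_{\{x\}}(x)=0$ for $x\neq 0$, and $S(0)=0$ automatically, so $S\leq 0$. Nevertheless $v_{A}=\sup T_{A}([-1,1])=1$ for every $A$, because $[-1,1]\setminus(A\cup\{0\})$ is never empty; hence $\inf_{A}v_{A}=1\neq 0$. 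Note that this example even satisfies all the extra hypotheses you hoped to import from Theorem~\ref{thm:dom}: $E$ and $F$ are Banach lattices, the norm of $F=\mathbb{R}$ is order continuous, each $T_{A}$ is $AM$-compact, and the whole net is dominated by the $AM$-compact operator $T_{\emptyset}$. So no compactness assumption on the individual operators rescues the statement; what fails is exactly the pointwise-to-uniform passage you isolated, and it fails because positive orthogonally additive operators, unlike positive linear ones, are not monotone on order intervals (for linear operators $\sup T_{\alpha}([-u,u])=T_{\alpha}(u)$ and the lemma is trivial).

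For comparison, the paper's own proof breaks down at precisely the spot you flagged. It represents $F$ inside $C_{\infty}(Q)$ and asserts that for each $\alpha$ there is a partition of unity $(\rho_{\xi})$ with $\rho_{\xi}v_{\alpha}=\rho_{\xi}T_{\alpha}x_{\xi}$ for suitable $x_{\xi}\in M$, i.e., that the supremum defining $v_{\alpha}$ is attained after splitting by band projections. This is false in general: in $F=\mathbb{R}$ the only partition of unity is the trivial one, and $\sup T_{\alpha}(M)$ need not be attained at any point of $M$ (at best such a decomposition holds up to an $\varepsilon$-perturbation). The monotonicity of $(v_{\alpha})$, for which the paper deploys this machinery, is in fact trivial, exactly as in your first paragraph; the real content, $\inf_{\alpha}v_{\alpha}=0$, is dismissed with ``the same arguments show\dots'', which is precisely the unjustified uniformization. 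Since the proof of Theorem~\ref{thm:dom} invokes Lemma~\ref{l-1} (applied to the net $G_{\alpha}$ with $|S-S_{\alpha}|\leq G_{\alpha}$) to manufacture its $\varepsilon$-net, that proof inherits the defect as written; any repair must replace Lemma~\ref{l-1} by a statement with genuinely stronger hypotheses on the \emph{net itself} (some equi-uniform behaviour on order intervals), not merely on a dominating operator. In short: judged as a proof your proposal is incomplete, but your diagnosis of the obstruction is exactly right, and the obstruction is fatal to the lemma, not to your argument.
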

\begin{proof}
Let us show  that  $(v_{\alpha})_{\alpha\in\Lambda}$ is the decreasing net in $F$. Indeed,  $F$ with a vector sublattice of the Dedekind complete vector lattice $C_{\infty}(Q)$ of all extended real valued continuous functions with pointwise ordering in some extremally disconnected compact space $Q$ (more exactly with its image under some vector lattice isomorphism),  (see \cite{Ku}, Theorem 1.4.5).  Take an arbitrary $v_{\alpha}\in F_{+}$, then there exists the partition of unity $(\rho_{\xi})_{\xi\in M}$, such that $\rho_{\xi}v_{\alpha}=\rho_{\xi}T_{\alpha}x_{\xi}$, where $x_{\xi}\in M$ and $M=\{x_{\xi}:\, \xi\in M\}$. Since the net $(T_{\alpha})_{\alpha\in\Lambda}$ is decreasing, we have $\rho_{\xi}T_{\beta}x_{\xi}\leq\rho_{\xi}T_{\alpha}x_{\xi}$ for every $\alpha\leq \beta$, $\alpha,\,\beta\in\Lambda$ and every $\xi\in M$ and therefore $v_{\beta}\leq v_{\alpha}$.  The same arguments show that $\inf{(v_{\alpha})_{\alpha\in\Lambda}}=0$.
\end{proof}

\begin{proof}[Proof of Theorem~\ref{thm:dom}]
Let $T\in\mathcal{U}_{+}(E,F)$ be an $AM$-compact operator, and $x\in E$. Firstly, we prove that operator $\pi^{x}T$ is also $AM$-compact. Fix an order bounded set  $M\subset E$. By definition of the operator $\pi^{x}T$ we have that
$\pi^{x}T(M)\subset T(\mathcal{F}_{x})$. Since, $\mathcal{F}_{x}$ is an order bounded set and operator $T$ is $AM$-compact, we have that $\pi^{x}T(M)$ is the relatively compact subset of $F$ and therefore $\pi^{x}T$ is an $AM$-compact operator. It is clear that  $\rho\pi^{x}T$ is also $AM$-compact, for an every order  projection  $\rho$ on $F$. The finite sum of $AM$-compact operators is also $AM$-compact operator. Thus, the assertion is proved for every $S\in \mathcal{A}_{T}$. Now, let $M$ be an order bounded set in $E$ and $S\in \mathcal{A}_{T}^{\uparrow}$. Then by the definition, there exists a net $(S_{\alpha})\subset \mathcal{A}_{T}$ and a decreasing net $(G_{\alpha})_{\alpha\in\Lambda}\subset \mathcal{U}_{+}(E,F)$, such that
$\inf{(G_{\alpha})_{\alpha\in\Lambda}}=0$ and $|S-S_{\alpha}|\leq G_{\alpha}$, $\alpha\in\Lambda$. Let $v_{\alpha}:=\sup\{G(x):\,x\in M\}$. Then by the Lemma~\ref{l-1} we have that $(v_{\alpha})_{\alpha\in\Lambda}$ is a decreasing net and $\inf(v_{\alpha})_{\alpha\in\Lambda}=0$. Fix $\varepsilon>0$. Since, the norm in $F$ is order continuous there exists a $\alpha_{0}\in\Lambda$, such that $\|v_{\alpha}\|<\frac{\varepsilon}{3}$, for every $\alpha\geq \alpha_{0}$. Let $S_{\alpha}(x_{1}),\dots, S_{\alpha}(x_{n})$ be a finite $\frac{\varepsilon}{3}$-net in $S_{\alpha}(M)$. Then we may write
\begin{align*}
\|S(x)-S(x_{i})\|\leq\|S(x)-S_{\alpha}(x)+S_{\alpha}(x)-S_{\alpha}(x_{i})+S_{\alpha}(x_{i})-S(x_{i})\|\leq\\
\|S(x)-S_{\alpha}(x)\|+\|S_{\alpha}(x)-S_{\alpha}(x_{i})\|+\|S(x_{i})-S_{\alpha}(x_{i})\|\leq\\
\|G_{\alpha}(x)\|+\frac{\varepsilon}{3}+\|G_{\alpha}(x)\|\leq\\
\|v_{\alpha}\|+\frac{\varepsilon}{3}+\|v_{\alpha}\|< \frac{\varepsilon}{3}+
\frac{\varepsilon}{3}+\frac{\varepsilon}{3}=\varepsilon,
\end{align*}
where $x\in M$. Thus $S(x_{1}),\dots, S(x_{n})$ is be a finite $\varepsilon$-net in $S(M)$. The same arguments are valid for every operator $S\in\mathcal{A}_{T}^{\uparrow\downarrow\uparrow}$. By the Theorem~\ref{Fr} an every abstract Uryson operator $S$ such that $0\leq S\leq T$ is  the relatively uniform limit  of some net of elements in the linear span of $\mathcal{A}_{T}^{\uparrow\downarrow\uparrow}$ and therefore is the $AM$-compact operator.
\end{proof}
\begin{cor}
Let $E,F$ be a Banach function spaces  lattice, $F$ having order continuous norm   $F$ be a order continuous Banach lattice and $T\in\mathcal{U}_{+}(E,F)$ be an integral Uryson operator. Then every   abstract  Uryson operator  $S\in\mathcal{U}(E,F)$, such that $0\leq S\leq T$ is $AM$-compact.
\end{cor}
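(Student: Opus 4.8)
The plan is to obtain the Corollary as an immediate synthesis of the two theorems that precede it, namely the Maz\'on--Segura compactness result (Theorem~\ref{thm:Seg-1}) and the domination theorem just established (Theorem~\ref{thm:dom}). First I would invoke Theorem~\ref{thm:Seg-1}: since $E,F$ are Banach function spaces with $F$ order continuous and $T$ is an integral Uryson operator belonging to $\mathcal{U}(E,F)$, that theorem guarantees that $T$ itself is $AM$-compact. This step removes the integral structure from the discussion and replaces it by the single abstract property we actually need downstream; note that Theorem~\ref{thm:Seg-1} does not require positivity of $T$, so it applies without any extra verification.

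Next I would check the hypotheses required to feed $T$ and $S$ into Theorem~\ref{thm:dom}. By assumption $T\in\mathcal{U}_{+}(E,F)$, so $T$ is a positive abstract Uryson operator, and we have just seen that it is $AM$-compact. The operator $S$ satisfies $0\leq S\leq T$ in the order on $\mathcal{U}(E,F)$; recalling that this order is defined by declaring one operator below another precisely when the difference is positive, the inequality $0\leq S$ says exactly that $S-0=S$ is positive, hence $S\in\mathcal{U}_{+}(E,F)$. Since Banach function spaces are in particular Banach lattices, the data $(E,F,T,S)$ meet every hypothesis of Theorem~\ref{thm:dom}: $E,F$ Banach lattices, $F$ order continuous, $T$ positive and $AM$-compact, and $0\leq S\leq T$.

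Finally I would apply Theorem~\ref{thm:dom} directly to conclude that $S$ is $AM$-compact, which is the assertion. I do not anticipate any genuine obstacle here, since all the substantive work has already been carried out in proving Theorem~\ref{thm:dom} (where the ``Up-and-down'' structure of $\mathcal{A}_{T}^{\uparrow\downarrow\uparrow}$, Lemma~\ref{l-1}, and the Freudenthal approximation are combined). The only point requiring a moment's care is the translation between the lattice order on operators and positivity, so that the hypothesis $S\in\mathcal{U}_{+}(E,F)$ of Theorem~\ref{thm:dom} is seen to be satisfied automatically from $0\leq S$.
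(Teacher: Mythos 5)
Your proposal is correct and is precisely the argument the paper intends (the paper gives no explicit proof, stating the corollary as an immediate consequence): Theorem~\ref{thm:Seg-1} yields $AM$-compactness of the integral operator $T$, and Theorem~\ref{thm:dom} then transfers it to any $S$ with $0\leq S\leq T$. Your observation that $0\leq S$ already forces $S\in\mathcal{U}_{+}(E,F)$, reconciling the corollary's hypothesis $S\in\mathcal{U}(E,F)$ with the positivity assumption in Theorem~\ref{thm:dom}, is exactly the right point of care.
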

Remark that for linear positive operators the similar theorem  was  proved by  Dodds and Fremlin in \cite{DF}.

\end{document}